\numberwithin{equation}{section}
\newtheorem{theorem}{Theorem}
\newtheorem{corollary}{Corollary}
\theoremstyle{definition}
\newtheorem{definition}[equation]{Definition}
\newtheorem{rk}[equation]{Remark}
\newcommand{\F}{\mathcal{F}}
\newcommand{\Hom}{\operatorname{Hom}}
\newcommand{\Aut}{\operatorname{Aut}}
\newcommand{\Syl}{\operatorname{Syl}}
\newcommand{\ov}{\overline}
\def \<{\langle }
\def \>{\rangle }
\renewcommand{\phi}{\varphi}
\title[A characterization of saturated fusion systems over abelian $2$-groups]{A characterization of saturated fusion systems over abelian $2$-groups}
\author[E.~Henke]{Ellen Henke \vspace{0.2 cm}\\
\tiny Department of Mathematical Sciences, 
University of Copenhagen, Universitetsparken 5, 
DK-2100 Copenhagen, Denmark.  
Phone: 0045 50303418,
Email: henke@math.ku.dk
}
\thanks{The author was supported by the Danish National Research
  Foundation through the Centre for Symmetry and
  Deformation (DNRF92).}
\begin{document}

\maketitle

\begin{abstract}
Given a saturated fusion system $\F$ over a $2$-group $S$, we prove that $S$ is abelian provided any element of $S$ is $\F$-conjugate to an element of $Z(S)$. This generalizes a  Theorem of Camina--Herzog, leading to a significant simplification of its proof. More importantly, it follows that any  $2$-block $B$ of a finite group has abelian defect groups if  all $B$-subsections are major. Furthermore, every $2$-block with a symmetric stable center has abelian defect groups.
\end{abstract}

\bigskip

\textit{\textbf{Keywords:} Saturated fusion systems, modular representation theory, finite groups}

\section{Introduction}

This short note gives an example of how a conjecture in the modular representation theory of finite groups can be proved by showing its generalization to saturated fusion systems. We refer the reader to \cite{Aschbacher/Kessar/Oliver:2011a} for definitions and basic results regarding fusion systems and to \cite{NagaoTsushima1989} as a background reference on block theory.
Here is our main theorem:

\begin{theorem}\label{mainthm}
 Let $\F$ be a saturated fusion system on a $2$-group $S$ such that for any $x\in S$, $\Hom_\F(\<x\>,Z(S))\neq \emptyset$. Then $S$ is abelian.
\end{theorem}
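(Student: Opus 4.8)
The plan is to combine two elementary reformulations of the hypothesis with a commutator-versus-order obstruction, all organised by an induction on $|S|$. First I would observe that, under the hypothesis, an element $x\in S$ is fully centralized in $\F$ \emph{if and only if} $x\in Z(S)$: indeed $x$ is $\F$-conjugate to some $t\in Z(S)$, and $|C_S(t)|=|S|$ is the largest centralizer order occurring in the $\F$-class of $x$; hence a fully centralized member of that class has centralizer of order $|S|$, that is, lies in $Z(S)$, while conversely every central element has centralizer $S$ and is fully centralized. In particular a central representative of each $\F$-class always exists, and $\exp Z(S)=\exp S$.

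Next I would show that fusion \emph{between} central elements is controlled by $\Aut_\F(S)$. If $z,z'\in Z(S)$ and $\varphi\co\<z\>\to\<z'\>$ is an $\F$-isomorphism, then $\<z'\>$ is fully centralized and $\Aut_S(\<z'\>)=1$, so receptivity extends $\varphi$ to an $\F$-homomorphism on $N_\varphi=C_S(z)=S$, giving some $\widehat\varphi\in\Aut_\F(S)$ with $\widehat\varphi(z)=z'$. Two consequences matter. Since $Z(S)$ is fully normalized with $\Aut_S(Z(S))=1$ a Sylow $2$-subgroup of $\Aut_\F(Z(S))$, the automizer $\Aut_\F(Z(S))$ has odd order, so each $\F$-class meets $Z(S)$ in a single $\Aut_\F(Z(S))$-orbit, necessarily of odd size. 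Moreover every $\widehat\varphi\in\Aut_\F(S)$ preserves the characteristic subgroup $[S,S]$; therefore two $\F$-conjugate central elements lie in $[S,S]$ simultaneously or not at all.

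These facts feed the key mechanism. Take a noncentral $x$ whose square is central (for instance when $S$ has nilpotence class $2$), and let $x\sim_{\F}z\in Z(S)$; then $z$ and $x$ have equal order and $x^2\sim_{\F}z^2$, with both squares central. By the previous paragraph, $x^2\in[S,S]\Leftrightarrow z^2\in[S,S]$. In the model group $S=C_{2^n}\times Q_8$ with $n\ge 2$ this fails: choosing $x$ to be an element of order $4$ in the $Q_8$-factor gives $x^2\in[Q_8,Q_8]\le[S,S]$, whereas every central element $z$ of order $4$ squares into the cyclic factor, so that $z^2\notin[S,S]$. Hence $x$ cannot be fused into $Z(S)$, contradicting the hypothesis; this commutator/order incompatibility is exactly the obstruction that forces $S$ to be abelian.

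It remains to bring an arbitrary counterexample into the range of this mechanism, and this is the main obstacle. The induction is set up through quotients: if $T\le S$ is strongly closed then $\F/T$ is saturated over $S/T$ and still satisfies the hypothesis, because $x\sim_{\F}t\in Z(S)$ forces $xT\sim tT\in Z(S/T)$. In a minimal counterexample $S/T$ is therefore abelian, i.e.\ $[S,S]\le T$, for every nontrivial strongly closed $T$; two distinct minimal such subgroups would intersect trivially and give $[S,S]=1$, so $\F$ has a unique minimal strongly closed subgroup $T\supseteq[S,S]$. As $T$ is normal in the $2$-group $S$ it meets $Z(S)$, and the strong closure of a central involution $t_1\in T\cap Z(S)$ must equal $T$; thus $T=\<t_1^{\,\F}\>$ is generated by a single $\F$-class of involutions. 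The hard part is to exploit this last structural information—together with the odd-order central automizer and Alperin's fusion theorem, which realises all fusion through $\Aut_\F(S)$ and the automorphisms of $\F$-centric essential subgroups containing $Z(S)$—to locate, inside such an essential subgroup, a $Q_8$-type configuration producing a noncentral element whose relevant central power lies in $[S,S]$ while the corresponding power of every eligible central element does not, thereby triggering the contradiction above.
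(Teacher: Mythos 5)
Your proposal contains a genuine gap, and you flag it yourself: the entire contradiction is deferred to the unproven ``hard part'' of locating a $Q_8$-type configuration inside an essential subgroup. Worse, the mechanism you propose cannot work as the general engine. Your obstruction compares $x^2$ and $z^2$ against $[S,S]$, but it is vacuous precisely for involutions ($x^2=1$), and involutions are where the actual tension lies: your own reduction ends with a minimal counterexample whose unique minimal strongly closed subgroup $T$ is generated by a class of \emph{central-fusable involutions}, to which the square test says nothing. The $C_{2^n}\times Q_8$ computation is a sanity check on one family, not an argument that every nonabelian minimal counterexample contains such a configuration; nothing in your setup produces a noncentral element of order $4$ with the required commutator behaviour. (There is also a smaller unjustified step: you assert that the strong closure of $\<t_1\>$ equals $\<t_1^{\F}\>$, i.e.\ that the subgroup generated by one $\F$-class of involutions is strongly closed; closure under fusion of \emph{all} elements of $\<t_1^\F\>$, not just the generators, needs proof.)

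The idea you are missing is the paper's Step~1: since $x$ is $\F$-conjugate to a central element, $\phi(\<x\>)$ is fully normalized, so the extension axiom \cite[I.2.6(c)]{Aschbacher/Kessar/Oliver:2011a} extends $\phi$ to $\alpha\in\Hom_\F(N_S(\<x\>),S)$ with $\alpha(x)\in Z(S)$, forcing $N_S(\<x\>)=C_S(x)$ for \emph{every} $x\in S$. This single fact kills all dihedral configurations: if $u,v$ are involutions, $\<u,v\>=\<x,u\>$ with $x^u=x^{-1}$, and $u\in N_S(\<x\>)=C_S(x)$ gives $[u,v]=1$, so $U:=\Omega_1(S)$ is elementary abelian --- hence simultaneously the nontrivial strongly closed subgroup your induction needs and an abelian one. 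Your quotient scaffolding (strong closure passes the hypothesis to $\F/T$, minimality makes the quotient abelian) and your observation that fusion of central elements is realized in $\Aut_\F(S)$ are both correct and match the paper's Step~3; but the paper's endgame is then immediate and avoids any search for exotic configurations: with $\ov{S}=S/U$ abelian, every morphism of $\ov\F$ extends to $\Aut_{\ov\F}(\ov S)$ \cite[Theorem~3.6]{Aschbacher/Kessar/Oliver:2011a}, whose elements are induced by $\Aut_\F(S)$ and so preserve $\ov{Z(S)}$; the hypothesis then yields $\ov S=\ov{Z(S)}$, i.e.\ $S=Z(S)U$, which is abelian since $U$ is. So the fix is not a cleverer commutator invariant but the extension-axiom argument that makes $\Omega_1(S)$ elementary abelian at $p=2$.
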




Since for any finite group $G$ with Sylow $2$-subgroup $S$ the fusion system $\F_S(G)$ is saturated, the above theorem yields immediately the following corollary:

\begin{corollary}[Camina--Herzog]\label{Cor1}
 Let $G$ be a finite group such that $|G:C_G(x)|$ is odd for any $2$-element $x$ of $G$. Then the Sylow $2$-subgroups of $G$ are abelian.
\end{corollary}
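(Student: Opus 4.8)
The plan is to deduce the corollary directly from Theorem~\ref{mainthm} applied to the fusion system $\F_S(G)$ of $G$ over a Sylow $2$-subgroup $S$. As recalled in the introduction, $\F_S(G)$ is saturated, so it suffices to verify its hypothesis: that for every $x\in S$ one has $\Hom_{\F_S(G)}(\langle x\rangle, Z(S))\neq\emptyset$. Unwinding the definition of $\F_S(G)$, a morphism $\langle x\rangle\to Z(S)$ is precisely the restriction of conjugation by some $g\in G$ satisfying $gxg^{-1}\in Z(S)$. Hence the entire task reduces to showing that each $x\in S$ is $G$-conjugate to an element of $Z(S)$; granting this, Theorem~\ref{mainthm} immediately yields that $S$ is abelian.

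To produce such a conjugate, fix $x\in S$. It is a $2$-element, so by hypothesis its conjugacy class $x^G$ has odd cardinality $|G:C_G(x)|$. First I would let $S$ act on the set $x^G$ by conjugation. Since $|S|$ is a power of $2$ while $|x^G|$ is odd, an orbit-size count forces the existence of a fixed point $y\in x^G$; that is, $y$ is $G$-conjugate to $x$ and centralized by all of $S$, so $S\subseteq C_G(y)$.

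Next I would check that any such fixed point lies in $Z(S)$, not merely in $C_G(S)$. Being conjugate to the $2$-element $x$, the element $y$ is itself a $2$-element, and since $y$ commutes with $S$ the set $S\langle y\rangle$ is a subgroup whose order is a power of $2$. As it contains the Sylow subgroup $S$, maximality forces $S\langle y\rangle=S$, whence $y\in S$; combined with $S\subseteq C_G(y)$ this gives $y\in Z(S)$. Choosing $g\in G$ with $gxg^{-1}=y$ then supplies the desired morphism $\langle x\rangle\to Z(S)$ in $\F_S(G)$.

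I expect no serious obstacle, since all the substantive content resides in Theorem~\ref{mainthm} itself and the corollary is a translation of the centralizer-index condition into the fusion-theoretic hypothesis. The only points deserving care are the fixed-point step, where one invokes that a $2$-group acting on an odd-order set has a fixed point, and the subsequent verification that this fixed point genuinely lands in $Z(S)$; both are elementary once the Sylow maximality of $S$ is used.
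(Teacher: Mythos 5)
Your proposal is correct and follows exactly the route the paper intends: the paper treats Corollary~\ref{Cor1} as an immediate consequence of Theorem~\ref{mainthm} applied to the saturated fusion system $\F_S(G)$, and your argument simply supplies the routine verification of the hypothesis. Your fixed-point count for the $S$-action on $x^G$ (or, equivalently, noting that $C_G(x)$ contains a full Sylow $2$-subgroup $T$ with $x\in Z(T)$) is the standard way to see that each $x\in S$ is $G$-conjugate into $Z(S)$, and your check that the fixed point lands in $S$, hence in $Z(S)$, is carried out correctly.
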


Corollary~\ref{Cor1} was first proved by Camina--Herzog \cite[Theorem~6]{Camina/Herzog1980}. As they point out, it means that one can read from the character table of a finite group if its Sylow $2$-subgroups are abelian. The proof of Camina--Herzog relies on the Theorem of Goldschmidt \cite{Goldschmidt:1974} about groups with strongly closed abelian $2$-subgroups, whereas our approach is elementary and self--contained. More precisely, we show Theorem~\ref{mainthm} by an  induction argument which appears natural in the context of fusion systems. Using the same idea, one can also give an elementary direct proof of Corollary~\ref{Cor1} which does not use fusion systems; see Remark~\ref{CaminaHerzog} for details.

\smallskip

We now turn attention to $2$-blocks of finite groups, where by a $p$-block we mean an  indecomposable direct summand of the group algebra $kG$ as a $(kG,kG)$-bimodule for an algebraically closed field $k$ of characteristic $p\neq 0$. The Brauer categories of $p$-blocks, which we introduce in Definition~\ref{BrauerCat}, provide important examples of saturated fusion systems. Given a finite group $G$ and a $p$-block $B$ of $G$, recall that a \textit{$B$-subsection} of $G$ is a pair $(x,b)$ such that $x$ is a $p$-element and $b$ is a block of $C_G(x)$ with the property that the induced block $b^G$ equals $B$. A $B$-subsection $(x,b)$ is called \textit{major} if the defect groups of $b$ are also defect groups of $B$. 
Theorem~\ref{mainthm} applied to the Brauer category of a $2$-block yields the following corollary, which we prove in detail at the end of this paper:

\begin{corollary}\label{Cor2}
 Suppose $B$ is a $2$-block of a finite group $G$ such that all $B$-subsections are major. Then the defect groups of $B$ are abelian.
\end{corollary}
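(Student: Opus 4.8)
The plan is to apply Theorem~\ref{mainthm} to the Brauer category of $B$. Fix a defect group $D$ of $B$ together with a maximal $B$-Brauer pair $(D,e_D)$, and let $\F=\F_{(D,e_D)}(G,B)$ be the associated Brauer category (Definition~\ref{BrauerCat}); this is a saturated fusion system on the $2$-group $D$. By Theorem~\ref{mainthm} it then suffices to show that $\Hom_\F(\<x\>,Z(D))\neq\emptyset$ for every $x\in D$, i.e.\ that every element of $D$ is $\F$-conjugate into $Z(D)$; abelianness of $D$ follows immediately.

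So fix $x\in D$ and let $b_x$ be the unique block of $C_G(x)$ with $(\<x\>,b_x)\le(D,e_D)$. Then $(x,b_x)$ is a $B$-subsection, and by hypothesis it is major. Choose a maximal $b_x$-Brauer pair $(P,f)$ in $C_G(x)$, so that $P$ is a defect group of $b_x$; majorness gives $|P|=|D|$. Since $\<x\>$ is a central $2$-subgroup of $C_G(x)$ it is contained in every defect group of $b_x$, so $x\in P$, and as $x\in Z(C_G(x))$ we even have $x\in Z(P)$.

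The heart of the matter is to upgrade this to fusion inside $\F$. Because $\<x\>$ is central in $C_G(x)$, one has $C_{C_G(x)}(R)=C_G(\<x\>R)$ for every $R\le C_G(x)$, and the standard correspondence between $b_x$-Brauer pairs in $C_G(x)$ and $B$-Brauer pairs lying over $\<x\>$ identifies $(P,f)$ with a $B$-Brauer pair containing $(\<x\>,b_x)$; as $|P|=|D|$, this $B$-Brauer pair $(P,f)$ is maximal. Any two maximal $B$-Brauer pairs are $G$-conjugate, so there is $g\in G$ with ${}^{g^{-1}}(P,f)=(D,e_D)$. Conjugation by $g^{-1}$ then carries the containment $(\<x\>,b_x)\le(P,f)$ into $(D,e_D)$, which exhibits $c_{g^{-1}}$ as a morphism in $\Hom_\F(\<x\>,D)$; and since $x\in Z(P)$ its image ${}^{g^{-1}}x$ lies in ${}^{g^{-1}}Z(P)=Z(D)$. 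Hence $x$ is $\F$-conjugate into $Z(D)$, as needed.

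The step I expect to be the main obstacle is precisely this last one. It would be a mistake to argue merely that $x$ is $G$-conjugate into $Z(D)$: the Brauer category $\F$ need not see all of the $G$-fusion in $D$, so one must verify that the conjugating element $g^{-1}$ respects the Brauer-pair structure relative to $(D,e_D)$ and hence really defines a morphism of $\F$. Concretely, I would check that ${}^{g^{-1}}b_x$ is the unique block making $(\<{}^{g^{-1}}x\>,{}^{g^{-1}}b_x)\le(D,e_D)$, which is exactly what certifies $c_{g^{-1}}\in\Hom_\F(\<x\>,D)$; the correspondence of Brauer pairs together with the $G$-conjugacy of maximal pairs (see \cite{NagaoTsushima1989} and \cite{Aschbacher/Kessar/Oliver:2011a}) is what makes this work.
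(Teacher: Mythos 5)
Your proof is correct, and its skeleton is the paper's: fix a maximal $B$-subpair $(D,e_D)$, form the Brauer category $\F=\F_{(D,e_D)}(G,B)$, use majorness of the subsection $(x,b_x)$ to produce a morphism of $\F$ carrying $x$ into $Z(D)$, and invoke Theorem~\ref{mainthm}. Where you diverge is in how that morphism is produced. The paper quotes the characterization of major subsections (\cite[Theorem~5.9.6]{NagaoTsushima1989}), which supplies $g\in G$ with $x^g\in Z(D)$ and the correct block, and then verifies $(\<x^g\>,b_x^g)\unlhd (D,e_D)$ via \cite[Lemma~5.3.4]{NagaoTsushima1989}. You instead take a maximal $b_x$-Brauer pair $(P,f)$ in $C_G(x)$, note $x\in Z(P)$ and $|P|=|D|$ by majorness, transport $(P,f)$ through the standard centralizer correspondence (for $R\le C_G(x)$ with $x\in Z(R)$, inclusions of $b_x$-pairs over $(\<x\>,b_x)$ in $C_G(x)$ agree with inclusions of $B$-pairs in $G$) to a maximal $B$-subpair above $(\<x\>,b_x)$, and finish with the $G$-conjugacy of maximal $B$-subpairs; in effect you re-prove the direction of \cite[Theorem~5.9.6]{NagaoTsushima1989} that the paper cites, with the subpair compatibility absorbed into the correspondence. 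Your closing caution is exactly right and matches the care taken in the paper: mere $G$-conjugacy of $x$ into $Z(D)$ would not suffice, since the Brauer category can be a proper subsystem of $\F_D(G)$, so one must check $(\<x\>^g,b_x^g)\le (D,e_D)$, which by uniqueness of $b_x$ is precisely the membership condition $c_g\in\Hom_\F(\<x\>,D)$ in Definition~\ref{BrauerCat}. The one soft spot is that the centralizer correspondence is invoked without a precise citation; it is classical (Alperin--Brou\'e; it also underlies the identification of the fusion system of $b_x$ with the centralizer subsystem of $\<x\>$ in $\F$), so this is a matter of referencing rather than substance. What each route buys: the paper's is shorter given the two ready-made results from \cite{NagaoTsushima1989}, while yours is more self-contained at the level of the subpair formalism and makes visible exactly why majorness forces the pair over $(\<x\>,b_x)$ to be a maximal $B$-subpair.
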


Neither Theorem~\ref{mainthm} nor Corollary~\ref{Cor1} or Corollary~\ref{Cor2} have obvious generalizations replacing the prime $2$ by an arbitrary prime $p$. This is because for $p\in\{3,5\}$ there are finite groups $G$ in which the centralizer of any $p$-element has index prime to $p$ and the Sylow $p$-subgroups of $G$ are extraspecial of order $p^3$ and exponent $p$; see the examples listed in \cite[Theorem~A]{Kulshammer/Navarro/Sambale/Tiep}. The corresponding statement of Theorem~\ref{mainthm} fails also at the prime $7$, as the exotic $7$-fusion systems discovered by Ruiz and Viruel \cite{RuizViruel2004} show. 

\smallskip

We conclude with a further application of Theorem~\ref{mainthm}, concerning $2$-blocks of finite groups with a symmetric stable center. Recall the definitions of a symmetric algebra and the stable center of an algebra from \cite{Kessar/Linckelmann:2002}.
Given a $p$-block $B$ of a finite group $G$ and a maximal $B$ subpair $(P,e)$, Kessar--Linckelmann \cite[Theorem~1.2]{Kessar/Linckelmann:2002} prove that the stable center of $B$ is symmetric provided the defect group $P$ of $B$ is abelian and $N_G(P,e)/C_G(P)$ acts freely on $P\backslash \{1\}$. Furthermore, by \cite[Theorem~1.1]{Kessar/Linckelmann:2002}, the converse implication is true if $B$ is the principal block of $G$. Our next corollary can be seen as a partial answer to the question in how far the converse holds for arbitrary $p$-blocks. This result is a direct consequence of Theorem~\ref{mainthm}, \cite[Theorem~3.1]{Kessar/Linckelmann:2002} and the fact that the Brauer category of a $p$-block is a saturated fusion system.

\begin{corollary}\label{Cor3}
 Let $B$ be a $2$-block of a finite group such that the stable center of $B$ is a symmetric algebra. Then $B$ has abelian defect groups.
\end{corollary}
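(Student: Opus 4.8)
The plan is to reduce the statement to Theorem~\ref{mainthm} applied to the Brauer category of $B$. Fix a maximal $B$-subpair $(P,e)$, so that $P$ is a defect group of $B$, and by Definition~\ref{BrauerCat} together with the fact that the Brauer category of a $2$-block is a saturated fusion system, the associated Brauer category $\F$ is a saturated fusion system over the $2$-group $P$. Theorem~\ref{mainthm} will then yield that $P$ is abelian, which is exactly the conclusion sought, provided we can verify its hypothesis: that $\Hom_\F(\langle x\rangle, Z(P)) \neq \emptyset$ for every $x \in P$, i.e.\ that every element of $P$ is $\F$-conjugate to an element of $Z(P)$.

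The essential point is therefore to extract this fusion-theoretic condition from the hypothesis that the stable center of $B$ is symmetric, and this is precisely what \cite[Theorem~3.1]{Kessar/Linckelmann:2002} supplies. I would begin by recalling from \cite{Kessar/Linckelmann:2002} the construction of the stable center of $B$ as the image of $Z(B)$ in the stable quotient, and the meaning of an algebra being symmetric, and then invoke their Theorem~3.1, which expresses the symmetry of the stable center in terms of the Brauer category $\F$. The consequence I need is that the symmetry of the stable center of $B$ forces every element of $P$ to be $\F$-conjugate into $Z(P)$.

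Granting this translation, the corollary follows at once: for each $x \in P$ we obtain an $\F$-morphism carrying $\langle x\rangle$ into $Z(P)$, so $\Hom_\F(\langle x\rangle, Z(P)) \neq \emptyset$, and Theorem~\ref{mainthm}, applied to the saturated fusion system $\F$ over the $2$-group $P$, gives that $P$ is abelian.

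I expect the main obstacle to lie in the second step, namely in stating \cite[Theorem~3.1]{Kessar/Linckelmann:2002} in the form required and matching its conclusion exactly to the hypothesis of Theorem~\ref{mainthm}. Concretely, one must confirm that the condition furnished by Kessar--Linckelmann is that \emph{every} element of $P$ (and not merely those in some distinguished class) is $\F$-conjugate to an element of $Z(P)$, and that "$\F$-conjugate into $Z(P)$" in their formulation coincides with the existence of an $\F$-morphism $\langle x\rangle \to Z(P)$. Once this dictionary between the representation-theoretic and fusion-theoretic statements is in place, no further computation is needed.
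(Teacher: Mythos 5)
Your proposal is correct and follows essentially the same route as the paper, which likewise proves Corollary~\ref{Cor3} simply by combining Theorem~\ref{mainthm}, \cite[Theorem~3.1]{Kessar/Linckelmann:2002}, and the fact that the Brauer category of a $2$-block is a saturated fusion system over a defect group. The translation step you flag as the potential obstacle --- that Kessar--Linckelmann's result yields $\F$-conjugacy of \emph{every} element of $P$ into $Z(P)$, i.e.\ $\Hom_\F(\langle x\rangle, Z(P))\neq\emptyset$ for all $x\in P$ --- is exactly the content the paper relies on and leaves to the citation as well.
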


\subsubsection*{Acknowledgement} Theorem~\ref{mainthm} and Corollary~\ref{Cor2} were conjectured by K\"uhlshammer--Navarro--Sambale--Tiep and answer the question posed in \cite[Question~3.1]{Kulshammer/Navarro/Sambale/Tiep} for $p=2$. The author would like to thank Benjamin Sambale for drawing her attention to this problem. Furthermore, she would like to thank an anonymous referee for suggesting Corollary~\ref{Cor3}.

\section{Proofs}

\begin{proof}[Proof of Theorem~\ref{mainthm}]
Let $\F$ be a counterexample to Theorem~\ref{mainthm} such that $|S|$ is minimal. 

\smallskip

\noindent{\em Step~1:} We show that $N_S(\<x\>)=C_S(x)$ for any $x\in S$.
By assumption, given $x\in S$, there exists $\phi\in\Hom_\F(\<x\>,S)$ such that $\phi(x)\in Z(S)$. In particular, $\phi(\<x\>)$ is fully normalized, so by \cite[I.2.6(c)]{Aschbacher/Kessar/Oliver:2011a}, there exists $\alpha\in\Hom_\F(N_S(\<x\>),S)$ such that $\alpha(\<x\>)=\phi(\<x\>)$. Then in particular, $\alpha(x)\in Z(S)$ and whence $[\alpha(x),\alpha(N_S(\<x\>))]=1$. As $\alpha$ is injective, this implies $[x,N_S(\<x\>)]=1$.

\smallskip

\noindent{\em Step~2:} We prove that $U:=\Omega_1(S)$ is elementary abelian. It is sufficient to show that any two involutions $u,v\in S$ commute. Note that $\<u,v\>$ is a dihedral subgroup of $S$, so there exists $x\in S$ with $\<u,v\>=\<x,u\>$ and $x^u=x^{-1}$. Then by Step~1, $[x,u]=1$, so $\<u,v\>$ is a four-group, which implies $[u,v]=1$.

\smallskip

\noindent{\em Step~3:} We now reach the final contradiction. By Step~2, $U$ is an elementary abelian subgroup of $S$, and since the image of an involution under an $\F$-morphism is again an involution, $U$ is strongly closed. Hence the factor system $\ov{\F}:=\F/U$ as in \cite[Section~II.5]{Aschbacher/Kessar/Oliver:2011a} is well-defined and by \cite[II.5.2, II.5.4]{Aschbacher/Kessar/Oliver:2011a} a saturated fusion system on $\ov{S}:=S/U$. By construction of $\ov{\F}$ and by \cite[I.4.7(a)]{Aschbacher/Kessar/Oliver:2011a} or \cite[II.5.10]{Aschbacher/Kessar/Oliver:2011a}, the morphisms of $\ov{\F}$ are precisely the group homomorphisms between subgroups of $\ov{S}$ which are induced by morphisms in $\F$. Hence, as $\Hom_\F(\<x\>,Z(S))\neq \emptyset$ for any $x\in S$, we have also 
\begin{equation}\label{star}
\tag{$*$}
\Hom_{\ov{\F}}(\<\ov{x}\>,\ov{Z(S)})\neq \emptyset \mbox{ for any }x\in S.
\end{equation}
In particular, since $\ov{Z(S)}\leq Z(\ov{S})$, the fusion system $\ov{\F}$ fulfills the assumptions of Theorem~\ref{mainthm}. So as $\F$ is a counterexample with $|S|$ minimal, $\ov{S}$ is abelian. Now by \cite[Theorem~3.6]{Aschbacher/Kessar/Oliver:2011a}, every morphism of $\ov{\F}$ extends to an element of $\Aut_{\ov{\F}}(\ov{S})$. By construction of $\ov{\F}$, the elements of $\Aut_{\ov{\F}}(\ov{S})$ are induced by elements of $\Aut_\F(S)$ and whence leave $\ov{Z(S)}$ invariant. It follows now from (\ref{star}) that $S=Z(S)U$. Hence, $S$ is abelian as $U$ is abelian, contradicting $\F$ being a counterexample.
\end{proof}

\begin{rk}[Proof of the Theorem of Camina--Herzog]\label{CaminaHerzog}
The proof of Corollary \ref{Cor1} in \cite{Camina/Herzog1980} also starts with showing that $U:=\Omega_1(S)$ is abelian for $S\in\Syl_p(G)$. An elementary proof of Corollary~\ref{Cor1} which does not rely on the theory of fusion systems, can be given by applying induction to $N_G(U)/U$ similarly as in Step~3 of our proof of Theorem~\ref{mainthm}. This requires to show that $N_G(U)$ controls fusion in $G$. In fact, by \cite[I.4.7(a)]{Aschbacher/Kessar/Oliver:2011a}, it is true in general that any abelian subgroup which is strongly closed in $S\in\Syl_p(G)$, controls fusion in $G$. However, in the special case we are in, there is a shorter argument to show that $N_G(U)$ controls fusion, which we give here:

\smallskip

We show first that $U\leq Z(S)$. Suppose by contradiction, there exists $a\in S$ with $[U,a]\neq 1$. Then we may choose $a$ of minimal order, which implies that $a$ acts as an involution on $U$. So $[U,a,a]=1$ and thus $U$ normalizes $W:=C_U(a)\<a\>$ as $U$ is abelian. By the assumption of the Theorem, there exists $g\in G$ with $a\in Z(S^g)$. As $W\leq C_G(a)$, by Sylow's Theorem we may assume $W\leq S^g$. Then $C_U(a)^{g^{-1}}\leq U$ as $U$ is strongly closed in $S$, yielding $C_U(a)\leq U^g$. Hence, since $U^g$ is abelian and $a\in Z(S^g)$, 
$$U^g\leq C_G(W)\unlhd N_G(W).$$ 
Note that for any $h\in G$, $U^h$ is strongly closed in $S^h$, so if $U\leq S^h$, then $U=U^h$ is strongly closed in $S^h$. In particular, $U$ is strongly closed in any $2$-subgroup containing $U$. Hence, as $U\leq N_G(W)$, it follows from Sylow's Theorem that $U$ is conjugate to $U^g$ by an element of $N_G(W)$ and thus $U\leq C_G(W)\leq C_G(a)$. This is a contradiction proving  $U\leq Z(S)$.

\smallskip

Let now $P\leq S$ and $x\in G$ such that $P^x\leq S$. By what we have just shown,  $U,U^x\leq C_G(P^x)$. Again, as $U$ is strongly closed in any $2$-subgroup containing $U$, there exists $c\in C_G(P^x)$ with $U^{xc}=U$. This proves that $N_G(U)$ controls fusion in $G$ as required.
\end{rk}

\begin{definition}[The Brauer category of a $p$-block]\label{BrauerCat}
 Let $G$ be a finite group, $p$ a prime, and $B$ a $p$-block of $G$. We refer the reader to \cite[Section~5.9.1]{NagaoTsushima1989} for the definitions of subpairs, $B$-subpairs, the relation $\unlhd$ and its transitive closure $\leq$, which is an ordering on subpairs. The defect groups of $B$ are precisely the $p$-subgroups $D$ of $G$ which occur as the first component of a maximal $B$-subpair of $G$. Fixing a maximal $B$-subpair $(D,b_D)$, for any subgroup $Q\leq D$, there exists a unique block $b_Q$ of $QC_G(Q)$ such that $(Q,b_Q)\leq (D,b_D)$. The Brauer category $\F_{(D,b_D)}(G,B)$ is the category whose objects are all subgroups of $D$ and, for $P,Q\leq D$, the set of morphisms from $P$ to $Q$ is given by 
$$\Hom_{\F_{(D,b_D)}(G,B)}(P,Q)=\{c_g:g\in G\mbox{ such that }(P^g,b_P^g)\leq (Q,b_Q)\},$$
where $c_g:P\rightarrow Q$ is defined via $c_g(x)=x^g$. It follows from \cite[Theorem~IV.3.2, Proposition~IV.3.14]{Aschbacher/Kessar/Oliver:2011a} that the category $\F_{(D,b_D)}(G,B)$ is a saturated fusion system on $D$. 
\end{definition}

\begin{proof}[Proof of Corollary~\ref{Cor2}]
Fix a maximal $B$-subpair $(D,b_D)$ of $G$ and set $\F:=\F_{(D,b_D)}(G,B)$. Let $x\in D$. Then by \cite[Theorem~5.9.3]{NagaoTsushima1989}, there exists a unique block $b_x$ of $C_G(x)$ such that $(\<x\>,b_x)\leq (D,b_D)$. Then $(x,b_x)$ is a $B$-subsection of $G$, which by assumption is major. Hence, by \cite[Theorem~5.9.6]{NagaoTsushima1989}, there exists $g\in G$ with $x^g\in Z(D)$ and $b_x^g=b_D^{C_G(x)}$. Note now the following general fact that follows from \cite[Lemma~5.3.4]{NagaoTsushima1989} and the definition of $\unlhd$: If $(P,b_P)$ is a $B$-subpair with $P\leq Z(D)$ and $b_P=b_D^{C_G(P)}$, then $(P,b_P)\unlhd (D,b_D)$. Hence, we have $(\<x^g\>,b_x^g)\unlhd (D,b_D)$ and thus $c_g:\<x\>\rightarrow D$ is a morphism in $\F$ by definition of the Brauer category. Therefore, $\F$ fulfills the assumption of Theorem~\ref{mainthm}, which implies that $D$ is abelian.  
\end{proof}

\bibliographystyle{amsplain}
\bibliography{repcoh}

\end{document}